\theoremstyle{plain}
\newtheorem{proposition}{Proposition}
\newtheorem{theorem}{Theorem}
\theoremstyle{definition}
\newtheorem{definition}{Definition}
\theoremstyle{remark}
\newcommand{\CDOT}{{\boldsymbol\cdot}}
\newcommand{\STAR}{{\odot}}
\newcommand{\dfin}[2]{#1\in\mathcal{A}^{#2}}
\newcommand{\hodgeop}{\star}
\newcommand{\Laplace}{\triangle}
\title{Riemannian Geometry Based on the Takagi's Factorization of the Metric Tensor}
\author{Juan M\'endez\\
 Edificio de Inform\'atica y Matem\'aticas\\
            Universidad de  Las Palmas de Gran Canaria\\
            35017 Las Palmas, Spain\\
            email: {juan.mendez@ulpgc.es}
}
\begin{document}

\maketitle

\begin{abstract}
The Riemannian geometry is one of the main theoretical pieces in Modern Mathematics and Physics. The study of Riemann Geometry in the relevant literature is performed by using a well defined analytical path. Usually it starts from the concept of metric as the primary concept and by using the connections as an intermediate geometric object, it is achieved the curvature and its properties.
This paper presents a different analytical path to analyze the Riemannian geometry. It is based on a set of intermediate geometric objects obtained from the Takagi's factorization of the metric tensor. These intermediate objects allow a new viewpoint for the analysis of the geometry, provide conditions for the curved vs. flat manifolds, and also provide a new decomposition of the curvature tensor in canonical parts, which can be useful for Theoretical Physics.
\end{abstract}

\section{Introduction}

The Riemannian geometry\cite{doCarmo:1992,Jost:2011,Kobayashi:1996}  has been the main theoretical contribution that allowed the development of non-Euclidean geometries in the late nineteenth century. Also in the twentieth century it has been the main tool that has allowed the development of the General Relativity in which the Geometry and Gravitation have been unified into an elegant theoretical framework, and until today without  experimental discrepancies. Today, the Riemannian geometry remains as a non-exhausted source for advanced studies for disciplines as Geometry and Theoretical Physics.

The exact solution of the Einstein equation\cite{Stephani:2009}, $\mathbf{G}=8\pi \mathbf{T}$, is
one of the main fully unsolved problems in modern Theoretical Physic. It has been solved in some special cases but the research community is far to have a methodology to provide solutions for general cases, even though the great activity involved. Nowadays, it remains being a motivational field.

The Einstein equation involves the Einstein tensor that is an geometric object obtained from the Riemann curvature after some contractions, and the stress-energy tensor that is a physic object. The Riemann curvature has focused many of the research studies in its properties, decomposition and factorization in canonical types. This is implicitly the goal of this paper, but it is not addressed directly, rather it is addressed from a lower level. The contribution of this paper is to study the Riemannian geometry from a different viewpoint as how is presented and analyzed in the reference literature. The analytical path usually presents the metric as the primary concept from which is obtained in successive steps the connection, the curvature, Ricci and Einstein tensors. All these, which can be describe as the concepts, entities or abstract objects of the geometry, also can be obtained based on a different analytical path by using a specific tool as is the Takagi's decomposition or factorization of the metric tensor.

The Takagi's decomposition of the metric tensor generates a set of intermediate objects that
allows a different path in the geometry analysis. One of the advantages is that allows a clarifying  use of some Topological concepts to classify the manifolds as curved or flat by using a different test that the curvature tensor.. This proposal is less economical in the number of intermediate objects, but the main advantage is that provides a different, non-better, viewpoint of the Riemannian geometry.

Matrix factorization or decomposition\cite{Horn:1985}, as LU or Cholesky,  has been used in many areas of the Mathematics to solve problems involving matrix calculus. Matrix factorization allows to express a matrix in  some normalized expression that simplifies the procedures involved in matrix theory, algorithms and computational tasks.  Usually, the matrix factorization provides some advantages for reducing the complexity. Perhaps the most active use of matrix factorization is in the High Performance Computing arena because its extensive use in the solution of linear equation systems using high parallel computers. However, far to that economical utility, the matrix factorization can also provide an utility in analysis of abstract problems, how is the case of use in this paper.

The plan of this paper is the following, Section \ref{sec:takagi} presents the Takagi's factorization of symmetric matrices and the definition of a new operator required to compactly express some vector equations. Section \ref{sec:takagimetric} presents the Takagi's factorization of the metric tensor. Section \ref{sec:objects} presents the intermediate geometric objects obtained from the factorization of the metric tensor and how the traditional objects of the Riemann geometry, as the Levi-Civita connection, the curvature, Ricci and Einstein tensors are obtained from these intermediate objects. The paper ends with the Conclusion Section and References.

\section{Takagi's Factorization of Symmetric Matrices}\label{sec:takagi}

The Takagi's factorization of a symmetric matrices is one of the matrix factorization procedures related to the eigenvalue decomposition. Although in this paper we are only interested in real matrices, the Takagi's factorization is more general regarding complex matrices. If $\mathcal{M}_{n\times n}(\mathbb{C})$ is the space of complex $n\times n$ matrices. Let $\mathbf{W}\in \mathcal{M}_{n\times n}(\mathbb{C})$ be a symmetrical matrix, then the Takagi's factorization\cite{Horn:1985} proves that exists an unitary matrix $\mathbf{U}\in \mathcal{M}_{n\times n}(\mathbb{C})$  and a nonnegative diagonal matrix $\mathbf{\Sigma}= \mathrm{diag}(\sigma_{1},\ldots,\sigma_{n})$ such that:
\begin{equation}\label{eq:Takagi}
\mathbf{W} = \mathbf{U}\mathbf{\Sigma}\mathbf{U}^{T}
\end{equation}

where the elements of $\mathbf{\Sigma}$ are the nonnegative square roots of the eigenvalues of $\mathbf{W}\mathbf{W}^{H}$ and the elements of $\mathbf{U}$ are an orthogonal set of the corresponding eigenvectors.
An equivalent expression for the Takagi's factorization\cite{Horn:1985}[Co\-ro\-llary 4.4.5] can be expressed based on non-unitary matrix $\mathbf{V}$ as follows:
\begin{equation}\label{eq:Takagi2}
\mathbf{W} = \mathbf{V}\mathbf{V}^{T}
\end{equation}

Although $\mathbf{W}$ be a real matrix, $\mathbf{W}\in \mathcal{M}_{n\times n}(\mathbb{R})$, the matrix $\mathbf{V}$ can be complex, $\mathbf{V}\in \mathcal{M}_{n\times n}(\mathbb{C})$, that depends greatly on the sign of the diagonal elements of the matrix $\mathbf{W}$.
We can analyze the matrix $\mathbf{V}$ by studding it as a decomposition in a set of vector rows or as a set of vector columns. In the first approach, the analysis is based on row-vectors, each row, eg. the $a$-nth , corresponds to a vector $\mathbf{R}^{(a)}$ containing the elements: $R^{(a)}_{b}= V_{ab}$. In this case the elements of matrix $\mathbf{W}$ can be expressed as:
\begin{equation}\label{eq:dotproduct}
W_{ab} = \sum_{c=1}^{n} V_{ac}(V^{T})_{cb} = \sum_{c=1}^{n} V_{ac}V_{bc} = \sum_{c=1}^{n} R^{(a)}_{c}R^{(b)}_{c} = \mathbf{R}^{(a)} \CDOT \mathbf{R}^{(b)}
\end{equation}

where we have used the dot product of two vectors, expressed by the use of  the $\CDOT$ operator, very common in elementary vector and matrix Algebra. Although it is obvious and seems too much elementary, we must remember that it is only an abstract way to express an hidden sum-of-products in the component domain.

The second approach is to study the matrix $\mathbf{V}$ as a set of column vectors, such that the column $a$-nth corresponds to the vector: $\mathbf{C}^{(a)}$, being: $C^{(a)}_{b} = V_{ba}$. The elements of matrix $\mathbf{W}$ can be expressed as:
\begin{equation}\label{eq:setproduct}
W_{ab} = \sum_{c=1}^{n} V_{ac}V_{bc} = \sum_{c=1}^{n} C^{(c)}_{a}C^{(c)}_{b} = \mathbf{C}_{a} \STAR \mathbf{C}_{b}
\end{equation}

where we have introduced a new operator $\STAR$, which is an hidden sum-of-products in the  vector set, while the operator $\CDOT$ is an hidden sum-of-products in the vector components. The operators $\STAR$ and $\CDOT$ are suitable abstractions to simplify the mathematical expressions related to vector and matrix operations. The algebra of this operator is simple.

\begin{definition}[Set Product]
Let $A_{I}\in\mathbb{C}^{n}$ and $B_{I}\in\mathbb{C}^{n}$ two sets of vectors such as the index: $I=1,\ldots,m$  is an enumeration in the set. 
The set product, $\STAR$, is a tensor $T_{ab}$ defined as:
\begin{equation}\label{eq:setproduct2}
T_{ab}= A_{a}\STAR B_{b} = \sum_{I=1}^{m}A_{Ia}B_{Ib}
\end{equation}
\end{definition}

If $A_{Ia}$ and $B_{Ib}$ are tensors of rank $1$, then $A_{Ia}B_{Ib}$ is a tensor of rank $2$; the sum of tensors of same rank is also a tensor, and therefore $T_{ab}$ is a tensor of rank $2$. The set product is symmetric: $A_{a}\STAR B_{b} = B_{b}\STAR A_{a}$, but the tensor $T_{ab}$ is not, $A_{a}\STAR B_{b}\neq A_{b}\STAR B_{a}$. Also, the distributive property is verified: $A\STAR(B+C)=A\STAR B + A\STAR C$. It is an abstraction of a sum-of-products, thus the Leibnitz derivative rule must be used according to its definition, that is: $d(A\STAR B)= dA\STAR B + A\STAR dB$. The definition of the $\STAR$ product can be extended to tensors of higher rank as:
\begin{equation}
T_{a\ldots b\ldots}= A_{a\ldots}\STAR B_{b\ldots} = \sum_{I=1}^{m}A_{Ia\ldots}B_{Ib\ldots}
\end{equation}

If the matrix $\mathbf{W}$ is not singular, then it is also no singular the matrix $\mathbf{V}$ and therefore the vector set  $\mathbf{R}^{(a)}$ and $\mathbf{C}^{(a)}$, used in Equations (\ref{eq:dotproduct}) and (\ref{eq:setproduct}), are linearly independent; if $\sum_{a}\lambda_{a}\mathbf{C}^{(a)}=0$, it implies that: $\lambda_a=0$

\section{Factorization of the Metric Tensor}\label{sec:takagimetric}

Let $(M,g)$ be a Riemann manifold, that is, a $n$-dimensional compact, differentiable, oriented and connected manifold $M$ with a metric $g$ locally reducible to a diagonal case:
\begin{equation}
\eta=\textrm{diag}(\;\underbrace{1,\ldots,1}_{r},\underbrace{-1,\ldots,-1}_{s}\;)
\end{equation}

where $r$ and $s=n-r$ are the number of positive and negative ones respectively. If both $r$ and $s$ are non null, it  is a pseudo-Riemann, or semi-Riemann, manifold with indefinite metric, while \emph{pure} Riemann manifold is a particular case that has positive defined metric with $s=0$.

Both tensors and differential forms allow the study of invariant properties in the manifold and are widely used on this paper. Let $\mathcal{A}^{p}$ be the set of $p$-forms on $M$, the Hodge duality gets a linear isomorphism between $\mathcal{A}^{p}$ and $\mathcal{A}^{n-p}$. The Hodge star operator, $\hodgeop$, defines a linear map $\hodgeop : \mathcal{A}^{p} \rightarrow \mathcal{A}^{n-p}$, verifying for $\dfin{\phi}{p}$\cite{Gockeler:1989}:
\begin{equation}
\hodgeop\hodgeop  \phi = (-1)^{D(p)}\phi \qquad D(p)=p(n-p)+s
\end{equation}

The exterior derivative, that defines a linear map $d:\mathcal{A}^{p}\rightarrow\mathcal{A}^{p+1}$,  allows the definition of the coderivative  $\delta:\mathcal{A}^{k}\rightarrow\mathcal{A}^{k-1}$ defined as\cite{Gockeler:1989}:
\begin{equation}
\delta\phi = (-1)^{C(p)}\hodgeop  d\hodgeop  \phi \qquad C(p)=np+n+1+s
\end{equation}

which, similar to $dd=0$, verifies: $\delta\delta = 0$. Let $\Laplace$ be a  second order differential operator, called Laplace-Beltrami,  that map $\Laplace : \mathcal{A}^{p}\rightarrow \mathcal{A}^{p}$. It is defined as: $ \Laplace = \delta d + d \delta$. A $p$-form $\dfin{\phi}{p}$ is called harmonic if it verifies: $\Laplace \phi=0$. In \emph{pure} Riemann's manifolds\cite{Jost:2011} this implies that it is closed: $d\phi=0$ and dual-closed: $\delta\phi=0$.

In Riemann manifolds with positive signature the harmonic forms defined by: $\Laplace \phi=0$ has the solutions of a second order elliptic differential equation, while in manifolds with negative signature the solutions are of a second order hyperbolic differential equation, whose solutions are in general some type of waves.

The Riemann manifold $M$ has a metric, $g_{ab}$, and a torsion free connection, or Levi-Civita connection\cite{Jost:2011}, such as it can be expressed by using the Chistoffel symbols. In every point $p$ of the manifold, the metric tensor defines the line element: $ds^{2}=g_{ab}dx^{}dx^{b}$ or in a general case: $ds^{2}=g_{ab}\mathbf{w}^{a}\mathbf{w}^{b}$, where $\mathbf{w}^{a}$ is the dual of the basis $\mathbf{e}_{a}$ associated to the tangent space in $p$. In this point of the manifold, we can apply the Takagi's Factorization of the metric tensor, $g_{ab}$; it provides a decomposition which generates a vector set, thus the factorization extended to all the points of the manifold defines a set of vector fields. The metric tensor $\mathbf{g}$ in a point $p$ can factorized as:
\begin{equation}
\mathbf{g}(p)= \mathbf{E}(p)\mathbf{E}^{T}(p)
\end{equation}

where the matrix $\mathbf{E}(p)\in \mathcal{M}_{n\times n}(\mathbb{C})$ is: 
\begin{equation}
\mathbf{E}(p) =
\left(
  \begin{array}{cccc}
    e_{11} & e_{12} & \cdots & e_{1n} \\
    e_{21} & e_{22} & \cdots & e_{2n} \\
    \vdots & \vdots & \ddots & \vdots \\
    e_{n1} & e_{n2} & \cdots & e_{nn} \\
  \end{array}
\right)
\end{equation}

The two approaches defined in the Section \ref{sec:takagi} can be used to analyze the matrix $\mathbf{E}$. These approaches are based in the use of the operators $\CDOT$ or $\STAR$, by constructing row or column vectors. The row vectors: $\mathbf{e}_{a}$ and column vectors: $\mathbf{A}_{a}$ can be used, where $a=1,\ldots,n$. The row vectors are constructed as: $\mathbf{e}_{a}=[e_{a1},e_{a2},\ldots,e_{an}]$ and the column vectors are constructed as: $\mathbf{A}_{a}=[e_{1a},e_{2a},\ldots,e_{na}]$. In both cases each element $g_{ab}$ can be expressed as:
\begin{equation}
g_{ab} = \mathbf{e}_{a}\CDOT\, \mathbf{e}_{b} = \mathbf{A}_{a}\STAR \mathbf{A}_{b}
\end{equation}

The option based on the use of the $\CDOT$ operator defines the vector basis $\mathbf{e}_{a}$ widely used in geometric analysis. But the analytical option based on the use of the $\STAR$ operator, which is the studied in this paper, involves the use of a set of linear independent vector fields, or  1-forms, $\mathbf{A}$. This last option implies that the metric tensor is factorized as:
\begin{equation}\label{eq:main}
g_{ab} = \sum_{I=1}^{n} A_{Ia}A_{Ib}
\end{equation}

\begin{definition}[Takagi Factorization]\label{def:takagi_metric}
The metric tensor can be factorized in each point $p$ of the manifold by using a set $\{\mathbf{A}_{1}, \ldots, \mathbf{A}_{n}\}$ of  linearly independent $1$-form with components $A_{Ia}$, where the upper-case index $(I, J, \ldots)$ are the index of set enumeration, and the lower-case index $(a, b, \ldots)$ are the corresponding to the components. The $1$-forms are:
\begin{equation}\label{eq:t3}
\mathbf{A}_{I} = A_{Ia}\mathbf{w}^{a}
\end{equation}
The metric tensor can be expressed as: $g_{ab}=A_{a}\STAR A_{b}$ and the line element can be expressed as:
\begin{equation}
ds^{2}  = \sum_{I=1}^{n} \mathbf{A}_{I}\mathbf{A}_{I}
\end{equation}
\end{definition}

In the paper, the vector fields will be indistinctly used in $1$-form representation: $\mathbf{A}_{I}=A_{Ia}\mathbf{w}^{a}$ and as a vector representation: $\mathbf{A}_{I}=A_{I}^{a}\mathbf{e}_{a}$, and its tensorial derivatives, because both represent the same geometric object\cite{Stephani:2009}.

\begin{proposition}\label{prop:plainspace}
If the $1$-forms $\mathbf{A}_{I}$ are exacts: $\mathbf{A}_{I} = d\phi_{I}$, where $\phi_{I}$ is a  function or 0-form, then the metric can be reduced to a Cartesian-like one as:
\begin{equation}
ds^{2} = d\phi \STAR d\phi= \sum_{I=1}^{n} (d\phi_{I})^{2}
\end{equation}
\end{proposition}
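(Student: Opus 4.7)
The plan is essentially a direct substitution. The line element from Definition \ref{def:takagi_metric} reads $ds^{2} = \sum_{I=1}^{n}\mathbf{A}_I\mathbf{A}_I$, so replacing each $\mathbf{A}_I$ by its hypothesized exact form $d\phi_I$ yields $ds^{2}=\sum_{I=1}^{n}(d\phi_I)^{2}$ immediately. Equivalently, the component identity $A_{Ia}=\partial_a\phi_I$ turns the factorization $g_{ab}=\sum_I A_{Ia}A_{Ib}$ into $g_{ab}=\sum_I \partial_a\phi_I\,\partial_b\phi_I$, which is precisely the coordinate expression of $d\phi \STAR d\phi$ announced in the statement.

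To justify the phrase \emph{Cartesian-like} I would then observe that the $\phi_I$ can actually serve as local coordinates around any point $p$. The discussion at the end of Section \ref{sec:takagi} shows that when $\mathbf{g}$ is nonsingular the Takagi matrix $\mathbf{E}$ is also nonsingular, so the 1-forms $\mathbf{A}_I=d\phi_I$ are pointwise linearly independent. Their component matrix $(\partial\phi_I/\partial x^{a})$ is therefore invertible, and the inverse function theorem delivers a local chart in which the $\phi_I$ are the coordinate functions. In that chart the coordinate basis of 1-forms is exactly $\{d\phi_I\}$ and the metric components reduce to the constant matrix $\delta_{IJ}$, which is the flat Euclidean metric.

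There is no genuine obstacle; the only subtlety I would flag in passing is the indefinite-signature case. When $s>0$ the Takagi factorization of a real $\mathbf{g}$ can force complex entries in $\mathbf{E}$, so some of the $\phi_I$ become imaginary and the formal identity $ds^{2}=\sum_I(d\phi_I)^{2}$ absorbs the required minus signs through $i^{2}=-1$. Up to this complex rescaling the metric is again a constant diagonal form matching $\eta$, so the conclusion that the manifold is locally flat remains valid, which is the geometric content that the proposition is preparing.
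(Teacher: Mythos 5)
Your argument is correct and follows essentially the same route as the paper: the identity $ds^{2}=\sum_{I}(d\phi_{I})^{2}$ is immediate substitution of $\mathbf{A}_{I}=d\phi_{I}$ into the factorized line element, and the paper likewise handles the signature by writing $ds^{2}=\sum_{I}\pm|d\phi_{I}|^{2}$ and invoking a coordinate change $d\phi_{I}\rightarrow dy^{a}$. Your use of the nonsingularity of $\mathbf{E}$ and the inverse function theorem to justify that the $\phi_{I}$ really furnish a chart is a welcome tightening of that coordinate-change step (the paper asserts it, and even claims globality, without this detail), but it is the same underlying proof.
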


Due that $\mathbf{A}_{I}$ can be complex, it implies that some of the terms $(d\phi_{I})^{2}$ can become negative according to the metric signature. This can be expressed as:
\begin{equation}
ds^{2} =  \sum_{I=1}^{n} \pm |d\phi_{I}|^{2}
\end{equation}

By using a suitable coordinate change such as: $d\phi_{I} \rightarrow dy^{a}$ we can obtain a Cartesian-like coordinate system with the corresponding signature, eg. one as: $(\eta_{ab})=\mathrm{diag}(1,1,1,-1)$. In this case, it is a global Cartesian-like metric, not only a locally one, and thus the manifold is flat. Therefore the non-exact property of $1$-forms $\mathbf{A}_{I}$ is the related to the curvature of the manifold.

\begin{theorem}\label{th:plainspace}
A sufficient condition for the manifold be flat is that the set of $1$-forms $\mathbf{A}_{I}$ be exact.
\end{theorem}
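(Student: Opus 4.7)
The plan is to leverage Proposition~\ref{prop:plainspace} and then upgrade the Cartesian-like expression of $ds^{2}$ into a genuine vanishing of the Riemann curvature by producing a coordinate chart in which the metric components are constant.

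First I would invoke Proposition~\ref{prop:plainspace} directly: the assumption of exactness supplies $0$-forms $\phi_{I}$ with $\mathbf{A}_{I}=d\phi_{I}$, so that
\begin{equation*}
ds^{2} = \sum_{I=1}^{n} (d\phi_{I})^{2},
\end{equation*}
where some of the $\phi_{I}$ may be complex-valued. Next I would partition the index set: those $I$ for which $\phi_{I}$ is real become new coordinates $y^{I}=\phi_{I}$, while those for which $\phi_{I}=i\psi_{I}$ is purely imaginary are relabelled as $y^{I}=\psi_{I}$. This produces $n$ real-valued functions on $M$ whose differentials, up to a factor $i$, are precisely the $\mathbf{A}_{I}$ of Definition~\ref{def:takagi_metric}. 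Since those $1$-forms are by hypothesis linearly independent at every point, the Jacobian of the map $y:M\to\mathbb{R}^{n}$ is nowhere singular, and hence $y$ is a local diffeomorphism yielding a coordinate chart about every point. In this chart the metric components read $g_{ab}=\eta_{ab}$, a constant matrix with $r$ entries equal to $+1$ and $s$ entries equal to $-1$.

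The proof would then conclude by the standard observation that when the metric components are constant, every Christoffel symbol vanishes identically, and with them the full Riemann curvature tensor. Vanishing of the curvature is the intrinsic definition of flatness, which is what was to be shown.

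The hard part will be the bookkeeping around the complex-valued nature of the Takagi factor $\mathbf{E}$: one must verify that exactly $r$ of the $\phi_{I}$ can be taken real and $s$ purely imaginary, so that the resulting signature agrees with $\eta$. This can be secured by appealing to Sylvester's law of inertia applied to the factorization $g=\mathbf{E}\mathbf{E}^{T}$, which forces the number of sign changes to match the signature of $g$. A secondary, essentially trivial, subtlety is that the hypothesis of exactness is global and thus delivers globally defined primitives $\phi_{I}$; but since flatness is a purely local property of the Riemann tensor, any chart built from locally defined primitives already annihilates the curvature pointwise throughout $M$.
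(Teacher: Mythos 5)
Your proposal follows essentially the same route as the paper: invoke Proposition~\ref{prop:plainspace}, take the primitives $\phi_{I}$ (real or purely imaginary according to the signature) as new coordinates $y^{a}$, and conclude that the resulting globally Cartesian-like metric with constant components $\eta_{ab}$ has vanishing connection and curvature. Your added bookkeeping (nonsingular Jacobian from linear independence of the $\mathbf{A}_{I}$, and Sylvester's law of inertia to fix the count of real versus imaginary $\phi_{I}$) merely makes explicit what the paper leaves implicit, so the argument is correct and matches the paper's.
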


\begin{proposition}\label{prop:2}
Each 1-forms can be expressed according the Hodge Decom\-position\cite{Morita:2001} as: $\mathbf{A}_{I} = \mathbf{X} + \mathbf{Y}$, where $\mathbf{X}$ is a closed 1-form, $d\mathbf{X}=0$, and $\mathbf{Y}$ is a dual-closed $1$-form,  $\delta \mathbf{Y}=0$. It means that we can expressed the $1$-forms as:  $\mathbf{A}_{I} = d\phi_{I} + \mathbf{A}'_{I}$ with $\delta \mathbf{A}'_{I}=0$.
\end{proposition}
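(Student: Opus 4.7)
The plan is to invoke the classical Hodge decomposition theorem and regroup its three canonical summands into two pieces of the required type. The first form of the statement (a closed plus a dual-closed piece) follows immediately by absorbing the harmonic remainder into either piece, while the second form identifies the closed piece explicitly as an exact differential.

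First I would apply the Hodge decomposition theorem, cited in the paper as \cite{Morita:2001}, to the $1$-form $\mathbf{A}_{I}\in\mathcal{A}^{1}$ on the compact, oriented manifold $M$: there exist a $0$-form $\phi_{I}$, a $2$-form $\beta_{I}$, and a harmonic $1$-form $h_{I}$ (so that $\Laplace h_{I}=0$) such that
\begin{equation*}
\mathbf{A}_{I} \;=\; d\phi_{I} \;+\; \delta\beta_{I} \;+\; h_{I}.
\end{equation*}
This is the standard output of Hodge theory on a compact oriented Riemannian manifold, and it rests on the Fredholm theory of the Laplace--Beltrami operator $\Laplace$ recalled just above the proposition.

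Next I would perform the regrouping. Set $\mathbf{X}=d\phi_{I}$ and $\mathbf{Y}=\delta\beta_{I}+h_{I}$. The verification is immediate from the two identities $dd=0$ and $\delta\delta=0$ already stated in the preamble: we have $d\mathbf{X}=d\,d\phi_{I}=0$, so $\mathbf{X}$ is closed; and $\delta\mathbf{Y}=\delta\delta\beta_{I}+\delta h_{I}=0$, because $\delta^{2}=0$ and, as noted just before the proposition, a harmonic form on a pure Riemannian manifold is simultaneously closed and dual-closed, so in particular $\delta h_{I}=0$. Renaming $\mathbf{A}'_{I}:=\mathbf{Y}=\delta\beta_{I}+h_{I}$ then yields the second form $\mathbf{A}_{I}=d\phi_{I}+\mathbf{A}'_{I}$ with $\delta\mathbf{A}'_{I}=0$, exactly as claimed.

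The main technical input I am borrowing is the existence of the three-term Hodge decomposition; this is the only genuinely nontrivial step and I would simply cite it rather than reproduce it. Two small points to flag in the write-up: \emph{(i)} the binary split $\mathbf{X}+\mathbf{Y}$ is not unique, since any harmonic form can be shifted between the closed and the dual-closed summand without spoiling either condition (uniqueness would require keeping the three-term version, with the projection onto the finite-dimensional space of harmonics isolated); \emph{(ii)} the paper formally allows indefinite signature, whereas the classical Hodge decomposition is proved in the pure Riemannian case, so strictly speaking one should either restrict the proposition to positive signature or appeal to the analogous decomposition in the pseudo-Riemannian setting. Modulo those caveats the proof reduces to a one-line regrouping of a cited theorem.
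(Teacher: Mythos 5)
Your proof is correct and follows essentially the same route as the paper: immediately after the proposition the paper invokes the three-term Hodge decomposition $\mathbf{A}_{I}=d\alpha+\delta\beta+\gamma$ and regroups it with $\phi_{I}=\alpha$ and $\mathbf{A}'_{I}=\delta\beta+\gamma$, using $\delta\delta=0$ and the fact that the harmonic part $\gamma$ is both closed and dual-closed, exactly as you do. Your added caveats on non-uniqueness of the binary split and on the indefinite-signature case are reasonable observations but do not change the argument.
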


The dual-closed part in the Hodge Decomposition is the concerning to the curved manifold because the closed part alone generates a flat Cartesian-like. The Hodge Decomposition is more precise because the general decomposition is:
\begin{equation}\label{hodge}
\mathbf{A}_{I} = d\alpha + \delta\beta + \gamma
\end{equation}

where $\alpha$ is a $0$-form, $\beta$ is a $2$-form and $\gamma$ is an harmonic form characteristic of the homology class. The decomposition in the Proposition \ref{prop:2} includes the general case because $\phi_{I}=\alpha$ contains the closed part and $\mathbf{A}'_{I}=\delta\beta + \gamma$  includes the dual closed and the homology parts, which is also closed and dual closed: $d\gamma=0$ and $\delta\gamma=0$.

\begin{proposition}\label{prop:3}
It is verified that: $A_{Ic}A_{J}{}^{c} =\delta_{IJ}$.
\end{proposition}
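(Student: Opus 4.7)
The plan is to translate the factorization into matrix form and identify $A_J{}^c$ as the appropriate inverse, so that the contraction collapses to the identity matrix.

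First, I would set up the matrix picture. The factorization $g_{ab}=\sum_{I=1}^{n}A_{Ia}A_{Ib}$ says exactly that the $n\times n$ matrix $\mathbf{A}$ with entries $A_{Ia}$ (rows indexed by the set label $I$, columns by the component label $a$) satisfies $G=\mathbf{A}^{T}\mathbf{A}$, where $G=(g_{ab})$. Because the $1$-forms $\{\mathbf{A}_{I}\}$ are linearly independent, the matrix $\mathbf{A}$ is non-singular, and so is $G$; this was already noted at the end of Section \ref{sec:takagi}. Hence $G^{-1}=(\mathbf{A}^{T}\mathbf{A})^{-1}=\mathbf{A}^{-1}(\mathbf{A}^{-1})^{T}$.

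Next, I would identify $A_{J}{}^{c}$ with a column of $\mathbf{A}^{-1}$. Writing $\mathbf{B}=\mathbf{A}^{-1}$ with entries $B_{cI}$, the above gives the component expression
\begin{equation*}
g^{cd}=\sum_{I=1}^{n}B_{cI}B_{dI}.
\end{equation*}
Raising the index of $\mathbf{A}_{J}$ then yields
\begin{equation*}
A_{J}{}^{c}=g^{cd}A_{Jd}=\sum_{I}B_{cI}\!\left(\sum_{d}A_{Jd}B_{dI}\right)=\sum_{I}B_{cI}\,(\mathbf{A}\mathbf{B})_{JI}=\sum_{I}B_{cI}\,\delta_{JI}=B_{cJ},
\end{equation*}
which says the raised-index vector is precisely the $J$-th column of $\mathbf{A}^{-1}$.

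Finally, the contraction becomes a straightforward matrix product:
\begin{equation*}
A_{Ic}A_{J}{}^{c}=\sum_{c}A_{Ic}B_{cJ}=(\mathbf{A}\mathbf{B})_{IJ}=(\mathbf{A}\mathbf{A}^{-1})_{IJ}=\delta_{IJ}.
\end{equation*}

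There is no real obstacle here; the only subtlety worth stating explicitly is the non-singularity of $\mathbf{A}$, which follows from the linear independence of the set $\{\mathbf{A}_{I}\}$ (so that inverting $G$ is legitimate and the dual columns $B_{cJ}$ exist). The identification $A_{J}{}^{c}=B_{cJ}$ is the conceptual content of the proposition: the raising of the component index $a$ with $g^{ab}$ coincides with taking the inverse with respect to the set index $I$, which is exactly what one should expect from a factorization $g=\mathbf{A}^{T}\mathbf{A}$.
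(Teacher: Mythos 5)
Your proof is correct, but it follows a different mechanism than the paper's. The paper starts from the identity $A_{a}\STAR A^{b}=\delta^{b}_{a}$ (the component-index contraction, i.e.\ $g_{ac}g^{cb}=\delta^{b}_{a}$), multiplies by $A_{Jb}$ to obtain $\sum_{I}A_{Ia}\left[A_{I}{}^{b}A_{Jb}-\delta_{IJ}\right]=0$, and then invokes the linear independence of the set $\{\mathbf{A}_{I}\}$ to force each bracket to vanish; the set index is handled by a ``vanishing linear combination implies vanishing coefficients'' argument. You instead make the inverse explicit: from $G=\mathbf{A}^{T}\mathbf{A}$ you identify $A_{J}{}^{c}=(\mathbf{A}^{-1})_{cJ}$, so the claimed contraction is literally $(\mathbf{A}\mathbf{A}^{-1})_{IJ}=\delta_{IJ}$. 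The two arguments rest on the same hypothesis (non-singularity of the matrix $A_{Ia}$, equivalently linear independence of the $1$-forms), but yours buys a sharper conceptual statement --- raising the component index with $g^{ab}$ is the same as inverting with respect to the set index --- and packages the independence assumption once, as invertibility, rather than as a separate coefficient-extraction step at the end; the paper's version stays entirely in index notation and never constructs $\mathbf{A}^{-1}$, at the cost of quoting $A_{a}\STAR A^{b}=\delta^{b}_{a}$ without derivation. Both are valid at any point $p$ of the manifold.
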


\begin{proof}
It is verified that: $A_{a}\STAR A^{b}=\delta^{b}_{a}$. If we multiply both sides by $A_{Jb}$, then:
\begin{equation}
A_{Jb} \left(\sum_{I=1}^{n}A_{Ia}A^{b}_{I} \right ) = \delta^{b}_{a} A_{Jb} = A_{Ja}
\end{equation}

that is equivalent to:
\begin{equation}
\sum_{I=1}^{n}A_{Ia}\left(A^{b}_{I} A_{Jb}\right) = A_{Ja}
\end{equation}

and:
\begin{equation}
\sum_{I=1}^{n}A_{Ia}\left[\left(A^{b}_{I} A_{Jb}\right) -\delta_{IJ}\right]=0
\end{equation}

due to the linear independence of $A_{I}$, if $\sum_{I=1}^{n}A_{Ia}\lambda_{I}=0$, then it must be $\lambda_{I}=0$
\end{proof}

\begin{proposition}[Gauge]\label{prop:gauge}
A normalization can be use to change the $1$-form set to be dual-closed, that is $\delta \mathbf{A}_{I}=0$.
\end{proposition}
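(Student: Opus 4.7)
The plan is to exploit the gauge freedom intrinsic to the Takagi factorization in order to impose $\delta\mathbf{A}_{I}=0$, in close analogy with the Lorenz-type gauge-fixing used for potentials in field theory.

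First I would identify the gauge group of the factorization. Two Takagi factorizations $\{\mathbf{A}_{I}\}$ and $\{\tilde{\mathbf{A}}_{I}\}$ of the same metric satisfy $\sum_{I}\tilde{A}_{Ia}\tilde{A}_{Ib}=\sum_{I}A_{Ia}A_{Ib}$, which forces $\tilde{\mathbf{A}}_{I}=O_{IJ}(p)\mathbf{A}_{J}$ with $O^{T}O=I$ at every point: the pointwise complex-orthogonal rotations of the set index are the only reshuffling that preserves the $\STAR$-contraction, and they give the full normalization latitude at our disposal.

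Second, I would translate the target condition $\delta\tilde{\mathbf{A}}_{I}=0$ into a PDE for $O$. The Leibniz-type identity $\delta(f\omega)=f\,\delta\omega-g(df,\omega)$ for a scalar $f$ and a $1$-form $\omega$, applied to $\tilde{\mathbf{A}}_{I}=O_{IJ}\mathbf{A}_{J}$, gives
\[
\delta\tilde{\mathbf{A}}_{I}=O_{IJ}\,\delta\mathbf{A}_{J}-g(dO_{IJ},\mathbf{A}_{J}).
\]
Invoking Proposition \ref{prop:2} to split $\mathbf{A}_{J}=d\phi_{J}+\mathbf{A}'_{J}$ with $\delta\mathbf{A}'_{J}=0$ reduces $\delta\mathbf{A}_{J}$ to $\Delta\phi_{J}$, so that the gauge condition becomes
\[
g(dO_{IJ},\mathbf{A}_{J})=O_{IJ}\,\Delta\phi_{J},\qquad O^{T}O=I,
\]
exhibiting the harmonic defect of the scalar parts $\phi_{J}$ as exactly what the gauge must absorb. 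Any $O$ that solves this system produces a new Takagi factorization that is automatically dual-closed, since it still rebuilds $g$ by construction.

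The hard part, carrying the real content of the statement, is the solvability of this system on $M$. Linearising $O=I+\epsilon$ with $\epsilon^{T}=-\epsilon$ yields $n(n-1)/2$ independent scalar parameters per point against $n$ conditions $\delta\tilde{\mathbf{A}}_{I}=0$, so for $n\geq 3$ there are at least as many gauge parameters as constraints; the leading symbol of the resulting equation for $\epsilon$ (through $g(d\epsilon_{IJ},\mathbf{A}_{J})$ and $O_{IJ}\Delta\phi_{J}$) is of Laplace/wave type, elliptic in the positive-definite case and hyperbolic in the indefinite case. On the compact, connected manifold $M$ one can then invoke standard existence theory for such operators, exactly parallel to the argument establishing the Lorenz gauge $\delta A=0$ in electrodynamics, and exponentiate (or iterate) the infinitesimal solution to produce a genuine complex-orthogonal $O(p)$ defined globally. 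The main obstacle is thus analytic rather than algebraic: verifying global solvability of the gauge-fixing PDE with the causal/elliptic structure imposed by the metric signature, which is where the compactness and connectedness of $M$ enter.
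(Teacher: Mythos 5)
Your route is genuinely different from the paper's, and it stalls at its decisive step. The paper does not rotate the set index at all: its proof shifts each $1$-form by an exact form, $\mathbf{A}_{I}\rightarrow \mathbf{A}_{I}+d\lambda$, observes that $\delta\mathbf{A}_{I}\rightarrow\delta\mathbf{A}_{I}+\Laplace\lambda$, and solves the scalar equation $\Laplace\lambda=-\delta\mathbf{A}_{I}$ (one $\lambda$ per $I$) by a Green function, so the entire analytic content is one Poisson equation. Your gauge group $\tilde{\mathbf{A}}_{I}=O_{IJ}\mathbf{A}_{J}$ with $O^{T}O=\mathbf{1}$ is, in fact, the transformation that genuinely preserves the Takagi factorization $g_{ab}=A_{a}\STAR A_{b}$ --- in that respect your setup is more faithful than the paper's, whose translation leaves $\mathbf{F}_{I}=d\mathbf{A}_{I}$ unchanged but does alter $A_{a}\STAR A_{b}$ --- yet this choice hands you a much harder existence problem, and that is exactly where your argument has a gap.

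The gap is the solvability claim. The condition $g(dO_{IJ},\mathbf{A}_{J})=O_{IJ}\,\Laplace\phi_{J}$ is \emph{first order} in the unknown $O$: $O$ enters only through $dO_{IJ}$ contracted against $\mathbf{A}_{J}$, and the term $O_{IJ}\Laplace\phi_{J}$ is of order zero in $O$, so the linearization in $\epsilon$ has a first-order, transport-type principal symbol along the frame directions, not a symbol ``of Laplace/wave type.'' The invocation of standard elliptic (or hyperbolic) existence theory therefore does not apply as stated. Counting $n(n-1)/2$ gauge parameters against $n$ constraints is not an existence argument for a PDE system, fails outright for $n=2$, and ignores that the antisymmetry of $\epsilon$ couples the equations for different values of $I$ through shared unknowns; moreover, passing from an infinitesimal $\epsilon$ to a globally defined complex-orthogonal $O(p)$ (a noncompact group) is itself a nontrivial step you only gesture at. To complete a proof along your lines you would need an actual solvability theorem for this underdetermined first-order system; alternatively, one can retreat to the paper's weaker transformation, where the only analytic input is $\Laplace\lambda=-\delta\mathbf{A}_{I}$, solvable on a compact pure-Riemannian manifold because $\delta\mathbf{A}_{I}$ is $L^{2}$-orthogonal to the constants (with the indefinite-signature case still requiring separate care).
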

\begin{proof}
Let $\lambda$ be a 0-form used to normalize the set of $1$-forms. A change in the set of $1$-forms as: $\mathbf{A}_{I}\rightarrow \mathbf{A}_{I}+d\lambda$ implies that $d\phi_{I}+\mathbf{A}'_{I} \rightarrow d(\phi_{I} +\lambda) + \mathbf{A}'_{I}$, but this will not change the properties of the curved space. Also, in this change it is verified that: $\delta \mathbf{A}_{I} \rightarrow \delta\mathbf{A}_{I}+ \Laplace\lambda$. We can choose $\lambda$ verifying $\Laplace\lambda = - \delta\mathbf{A}_{I}$ previous to the normalization, such that the $\mathbf{A}$ set become dual closed after the normalization. The solution for the Laplacian equation $\Laplace\alpha=\beta$ is always supposed, expressed by means of a Green function: $\alpha = G\circ\beta$.
\end{proof}

This normalization is not mandatory, rather it is optional. The materials of the rest of the paper are presented without such normalization.

\section{Objects in the Riemannian Geometry}\label{sec:objects}

In this section we obtain the expressions of the main objects of the Riemannian geometry based on the set of 1-forms $\mathbf{A}$. These objects are the Levi-Civita connection and the curvature tensor. This last, is obtained from the first and second order ordinary derivative of the metric tensor. The first order derivative is:
\begin{equation}\label{eq:derivative_g}
\partial_{c} g_{ab} = \partial_{c}A_{a}\STAR A_{b} +
\partial_{c}A_{b}\STAR A_{a}
\end{equation}

\begin{definition}
Let $\mathbf{F}_{I}$ be a set of closed $2$-form defined as: $\mathbf{F}_{I} = d\mathbf{A}_{I}$. 
\end{definition}

The components of $\mathbf{F}_{I}$ are: $F_{Iab} = \partial_{a}A_{Ib}-\partial_{b}A_{Ia} = \nabla_{a}A_{Ib}-\nabla_{b}A_{Ia}$, expressed from the ordinary and the tensorial derivatives. The ordinary and tensorial derivative of $\mathbf{A}_{I}$ can be expressed based on $\mathbf{F}_{I}$ as:

\begin{equation}\label{eq:aux1}
\partial_{a}A_{Ib} = \partial_{(a} A_{Ib)}  + \partial_{[a} A_{Ib]} = \partial_{(a} A_{Ib)} + \frac{1}{2} F_{Iab}
\end{equation}

\begin{equation}\label{eq:aux2}
\nabla_{a}A_{Ib} = \nabla_{(a} A_{Ib)}  + \nabla_{[a} A_{Ib]} =   S_{Iab} + \frac{1}{2} F_{Iab}
\end{equation}

where $\partial_{(a} A_{Ib)}$, $\partial_{[a} A_{Ib]}$,  $S_{Iab}=\nabla_{(a} A_{Ib)}$ and $\nabla_{[a} A_{Ib]}$ are the symmetrical and skew-symmetrical parts of the ordinary and tensorial derivative respectively. Remark that the index $I$, which is concerning to set enumeration, is excluded of the operators for symmetry and skew-symmetry $()$ $[\,]$ respectively.

\begin{proposition}\label{prop:christoffel} 
The connections $\Gamma$ are expressed as:
\begin{eqnarray}
\Gamma_{cab} & = &  A_{c}\STAR \partial_{(a} A_{b)}  +
\frac{1}{2}\left [ A_{a}\STAR F_{bc} + A_{b}\STAR F_{ac} \right ] \\
\Gamma^{c}{}_{ab} & = & A^{c} \STAR \partial_{(a} A_{b)} +
\frac{1}{2}[A_{a} \STAR F_b{}^c + A_{b} \STAR F_a{}^c]
\end{eqnarray}
\end{proposition}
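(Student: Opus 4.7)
The plan is to start from the classical Levi-Civita/Christoffel formula
\[
\Gamma_{cab} = \tfrac{1}{2}\bigl(\partial_{a}g_{bc} + \partial_{b}g_{ac} - \partial_{c}g_{ab}\bigr),
\]
which is a consequence of metric compatibility and the torsion-free assumption made immediately before the statement, and then substitute the expression (\ref{eq:derivative_g}) for the three first-order derivatives of $g$. Each substitution produces a sum of two $\STAR$-products, for a total of six terms on the right. The target formula is then simply a regrouping of these six terms.

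Concretely, I would write out
\[
\partial_{a}g_{bc} = \partial_{a}A_{b}\STAR A_{c} + \partial_{a}A_{c}\STAR A_{b},
\]
and analogous expansions for $\partial_{b}g_{ac}$ and $\partial_{c}g_{ab}$, then collect the six resulting terms according to which of $A_{a}$, $A_{b}$, $A_{c}$ sits in the $\STAR$. Using the symmetry of $\STAR$ on its two operands (the identity $X\STAR Y = Y\STAR X$ noted after Definition~1), the factor $A_{c}$ appears in two terms whose other operand is $\partial_{a}A_{b}+\partial_{b}A_{a} = 2\,\partial_{(a}A_{b)}$; the factor $A_{b}$ appears in two terms whose other operand is $\partial_{a}A_{c}-\partial_{c}A_{a}=F_{ac}$; and the factor $A_{a}$ appears in two terms whose other operand is $\partial_{b}A_{c}-\partial_{c}A_{b}=F_{bc}$. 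Dividing by $2$ yields the first displayed identity of the proposition.

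For the raised-index version I would multiply by $g^{cd}$ and pass the contraction inside the $\STAR$. This is legitimate because $A_{c}\STAR X_{c} = \sum_{I} A_{Ic}X_{Ic}$ is linear in its $c$-indexed slots, so
\[
g^{cd}(A_{d}\STAR X) = (g^{cd}A_{d})\STAR X = A^{c}\STAR X,
\]
and similarly for the contraction on $F_{bd}$, which produces $F_{b}{}^{c}$. Combining gives the second formula.

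The only point that requires care is bookkeeping: the $\STAR$ operator is symmetric in its two operands but the resulting tensor is not symmetric in the two free lower-case indices, so one has to keep track of which slot of each $\STAR$ carries the free index $c$ (or $a$, $b$) when regrouping. Apart from this, the argument is mechanical, and no use is made of the linear-independence of the $A_{I}$ (that was needed for Proposition~\ref{prop:3}, not here) nor of any gauge choice.
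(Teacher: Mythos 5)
Your proposal is correct and follows essentially the same route as the paper, which simply cites the Christoffel formula $\Gamma_{cab}=\tfrac{1}{2}(\partial_{a}g_{cb}+\partial_{b}g_{ca}-\partial_{c}g_{ab})$ together with Equation (\ref{eq:derivative_g}); you have merely written out the six-term expansion and regrouping that the paper leaves implicit. The raised-index step, passing $g^{cd}$ inside the $\STAR$ sum, is likewise the intended (and valid) argument.
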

\begin{proof}
The result is obtained from on the Equation (\ref{eq:derivative_g}) and their definition\cite{Landau:1973, Misner:1973}:
\begin{equation}\label{eq:christoffeldef}
\Gamma_{cab}= \frac{1}{2}\left (\partial_{a} g_{cb} + \partial_{b} g_{ca} - \partial_{c} g_{ab} \right)
\end{equation}
\end{proof}

\begin{proposition}\label{prop:6}
The symmetric and skew-symmetric tensors $S_{Iab}$ and $F_{Iab}$ verify:
\begin{eqnarray}
A_{c} \STAR S_{ab} +
\frac{1}{2}[A_{a} \STAR F_{bc} + A_{b} \STAR F_{ac}] &=& 0
\end{eqnarray}
\end{proposition}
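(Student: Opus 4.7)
The plan is to derive the claimed identity directly from Proposition \ref{prop:christoffel} by rewriting the symmetric-derivative term $A_c \STAR \partial_{(a}A_{b)}$ in terms of $S_{ab}$ plus a piece that reproduces $\Gamma_{cab}$.

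First, I would use the definition of the tensorial derivative on a $1$-form, $\nabla_a A_{Ib} = \partial_a A_{Ib} - \Gamma^{d}{}_{ab} A_{Id}$, and take the symmetric part in $(a,b)$. Because the Levi-Civita connection is torsion free, $\Gamma^{d}{}_{ab}$ is symmetric in its lower indices, and combining with Equation (\ref{eq:aux2}) this gives
\begin{equation*}
\partial_{(a} A_{Ib)} = S_{Iab} + \Gamma^{d}{}_{ab}\, A_{Id}.
\end{equation*}

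Second, I would apply $A_c \STAR$ to both sides. Since the Christoffel symbols carry no set-enumeration index, they pull out of the $\STAR$ product as scalar coefficients, and the fundamental relation $A_c \STAR A_d = g_{cd}$ from Definition \ref{def:takagi_metric} converts the second term into a lowered Christoffel symbol:
\begin{equation*}
A_c \STAR \partial_{(a} A_{b)} \;=\; A_c \STAR S_{ab} + \Gamma^{d}{}_{ab}\,g_{cd} \;=\; A_c \STAR S_{ab} + \Gamma_{cab}.
\end{equation*}

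Third, I would plug this into the expression for $\Gamma_{cab}$ given by Proposition \ref{prop:christoffel}: the $\Gamma_{cab}$ terms cancel on the two sides and the remaining identity is exactly the claim,
\begin{equation*}
A_c \STAR S_{ab} + \tfrac{1}{2}\bigl[A_a \STAR F_{bc} + A_b \STAR F_{ac}\bigr] = 0.
\end{equation*}
The only subtlety I anticipate is keeping track of which indices live on the $\STAR$ (the enumeration index $I$, implicitly summed) versus the component indices, and ensuring that $\Gamma^{d}{}_{ab}$ is treated correctly as an $I$-independent coefficient when moving it past the $\STAR$. Once that bookkeeping is fixed, the proof is a one-line substitution.
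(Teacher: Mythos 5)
Your proposal is correct and follows essentially the same route as the paper's own proof: both take the symmetric part of $\nabla_a A_{Ib} = \partial_a A_{Ib} - \Gamma^{d}{}_{ab}A_{Id}$ to get $\partial_{(a}A_{Ib)} = S_{Iab} + \Gamma^{d}{}_{ab}A_{Id}$, contract with $A_{Ic}$ (using $A_c \STAR A_d = g_{cd}$), and cancel $\Gamma_{cab}$ against the expression in Proposition \ref{prop:christoffel}. Your version merely spells out the bookkeeping that the paper leaves implicit.
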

\begin{proof}
The symmetric part of the tensorial derivative can be expressed as:
\begin{equation}
 \nabla_{(a} A_{Ib)} =  \partial_{(a} A_{Ib)} - \Gamma^{c}_{ab} A_{Ic} = S_{Iab}
\end{equation}

that implies:
\begin{equation}
 \partial_{(a} A_{Ib)} =  S_{Iab} + \Gamma^{c}{}_{ab} A_{Ic}
\end{equation}

by multiplying it by $A_{Ic}$ and using the results of Proposition \ref{prop:christoffel} is obtained the proposed expression.
\end{proof}

\begin{proposition}\label{prop:7}
The symmetrical tensor $\mathbf{S}_{I}$ can be expressed from the skew-symmetric $\mathbf{F}_{I}$ as:
\begin{equation}
S_{Jab} = -\frac{1}{2}A_{J}^{c}[A_{a} \STAR F_{bc} + A_{b} \STAR F_{ac}]
\end{equation}
\end{proposition}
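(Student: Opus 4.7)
The plan is to obtain the explicit formula for $S_{Jab}$ by contracting the identity of Proposition~\ref{prop:6} with a suitable factor and then applying the orthogonality-type relation of Proposition~\ref{prop:3} to isolate a single element of the set indexed by $J$. The key observation is that Proposition~\ref{prop:6} contains an implicit sum over the set index $I$ (hidden inside the $\STAR$ operator acting on $A_c$ and $S_{ab}$), so in order to extract $S_{Jab}$ for a specific value of the set index we need an operator that collapses that sum to a Kronecker delta. That operator is precisely provided by Proposition~\ref{prop:3}.

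Concretely, I would first rewrite Proposition~\ref{prop:6} by unpacking the leading $\STAR$ product:
\begin{equation*}
\sum_{I=1}^{n} A_{Ic}\, S_{Iab} \;+\; \tfrac{1}{2}\bigl[A_{a}\STAR F_{bc} + A_{b}\STAR F_{ac}\bigr] \;=\; 0.
\end{equation*}
Next I would multiply both sides by $A_{J}{}^{c}$ and contract over the free index $c$. The second bracketed term simply picks up the factor $A_{J}{}^{c}$ and is left untouched, while the first term becomes
\begin{equation*}
\sum_{I=1}^{n} \bigl(A_{Ic} A_{J}{}^{c}\bigr)\, S_{Iab}.
\end{equation*}
By Proposition~\ref{prop:3}, $A_{Ic}A_{J}{}^{c}=\delta_{IJ}$, so this sum collapses to $S_{Jab}$. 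Rearranging yields exactly the claimed identity.

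There is really no obstacle here beyond keeping the bookkeeping of the two flavours of indices straight: the lower-case tensor indices $a,b,c$ are contracted with the metric in the usual way, whereas the upper-case set index $I$ is contracted only through the set-product $\STAR$ or through the orthogonality relation of Proposition~\ref{prop:3}. Once the roles are fixed, the derivation is a one-line contraction; the content of the proposition is therefore an immediate corollary of Propositions~\ref{prop:3} and~\ref{prop:6}, and the interpretive point worth emphasizing is that the symmetric part $S_I$ of the tensorial derivative of the frame $\mathbf{A}_I$ carries no independent information beyond what is already encoded in the closed $2$-forms $\mathbf{F}_I$ and the frame itself.
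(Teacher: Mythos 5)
Your proof is correct and follows exactly the paper's own argument: contract the identity of Proposition~\ref{prop:6} with $A_{J}{}^{c}$ and use the orthogonality relation $A_{Ic}A_{J}{}^{c}=\delta_{IJ}$ of Proposition~\ref{prop:3} to collapse the hidden sum over the set index, then rearrange. The only difference is that you spell out the bookkeeping the paper leaves implicit.
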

\begin{proof}
It is obtained by multiplying  the expression in the previous Proposition by $A_{J}^{c}$ and using the result of the Proposition \ref{prop:3}.
\end{proof}

It must be remarked that $\mathbf{S}_{I}$ is the symmetric derivative part of $\mathbf{A}_{I}$. However, it depends on the skew-symmetric derivative part $\mathbf{F}_{I}$, thus both derivative parts are dependents, being the skew-symmetric part that rules the symmetric one because if $\mathbf{F}_{I}$ is null also is null $\mathbf{S}_{I}$.

\begin{theorem}
If the set of $1$-forms $\mathbf{A}_{I}$ are closed, $\mathbf{F}_{}=d\mathbf{A}_{I}=0$, then the members this set are Killing vectors of the manifold: $S_{Iab}=(\nabla_{a} A_{Ib}+\nabla_{b} A_{Ia})/2=0$, and therefore the metric is invariant along its field lines, due to the Lie derivative: $\mathcal{L}_{\mathbf{A}_{I}}g_{ab}=0$
\end{theorem}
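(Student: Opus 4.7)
The plan is to read this as a direct corollary of Proposition \ref{prop:7} together with the standard identification between Killing's equation and the Lie derivative of the metric. First I would invoke Proposition \ref{prop:7}, which gives the explicit formula
\begin{equation*}
S_{Jab} = -\frac{1}{2} A_{J}^{c}\bigl[A_{a}\STAR F_{bc} + A_{b}\STAR F_{ac}\bigr].
\end{equation*}
The hypothesis $\mathbf{F}_{I} = d\mathbf{A}_{I} = 0$ says that every member of the set $\{F_{Iab}\}_{I=1}^{n}$ vanishes identically. Since the $\STAR$ operator is, by Definition 1, the sum $A_{a}\STAR F_{bc} = \sum_{I} A_{Ia} F_{Ibc}$, each summand is zero, so both bracketed terms vanish and $S_{Jab}=0$ for every $J$.

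Next I would unpack the definition $S_{Iab} = \nabla_{(a}A_{Ib)} = \tfrac{1}{2}(\nabla_{a} A_{Ib} + \nabla_{b} A_{Ia})$ introduced just before Proposition \ref{prop:christoffel}. The vanishing $S_{Iab}=0$ is then literally Killing's equation
\begin{equation*}
\nabla_{a} A_{Ib} + \nabla_{b} A_{Ia} = 0,
\end{equation*}
so each $\mathbf{A}_{I}$ is a Killing $1$-form, equivalently (via the musical isomorphism between $1$-forms and vector fields already alluded to after Definition \ref{def:takagi_metric}) a Killing vector field on $M$.

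Finally I would close the chain with the well-known identity for the Lie derivative of the metric along a vector field $X$ with torsion-free metric-compatible connection, namely $\mathcal{L}_{X} g_{ab} = \nabla_{a} X_{b} + \nabla_{b} X_{a}$. Applied to $X = \mathbf{A}_{I}$ this gives $\mathcal{L}_{\mathbf{A}_{I}} g_{ab} = 2 S_{Iab} = 0$, so the metric is preserved along the flow of each $\mathbf{A}_{I}$. There is essentially no obstacle here: the content of the theorem is carried entirely by Proposition \ref{prop:7}; the remaining work is just recognizing the symmetric covariant derivative of a $1$-form as (half) the Lie derivative of the metric.
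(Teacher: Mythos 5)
Your proposal is correct and follows essentially the same route as the paper, which (in the remark preceding the theorem) deduces $S_{Iab}=0$ from $\mathbf{F}_{I}=0$ via Proposition \ref{prop:7} and then reads $S_{Iab}=\nabla_{(a}A_{Ib)}=0$ as Killing's equation, equivalent to $\mathcal{L}_{\mathbf{A}_{I}}g_{ab}=0$. Your write-up is in fact slightly more explicit than the paper, which leaves the Killing/Lie-derivative identification unstated, but the underlying argument is identical.
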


\begin{proposition}\label{prop:geodesic}
The geodesic line $\mathbf{u}$ of the manifold verifies:
\begin{equation}
\frac{du^{c}}{ds}+ A^{c} \STAR [\partial_{(a} A_{b)}u^{a}u^{b}] =
(A_{a}u^{a}) \STAR F^{c}{}_{b}u^{b}
\end{equation}
\end{proposition}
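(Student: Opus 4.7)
The plan is to derive the stated identity directly from the standard geodesic equation $du^c/ds + \Gamma^c{}_{ab}\,u^a u^b = 0$ by substituting the expression for the Christoffel symbols supplied by Proposition \ref{prop:christoffel} and simplifying the contraction with $u^a u^b$.

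First I would write
\begin{equation*}
\Gamma^{c}{}_{ab}\,u^{a}u^{b} = \bigl(A^{c} \STAR \partial_{(a} A_{b)}\bigr)u^{a}u^{b}
+ \tfrac{1}{2}\bigl[A_{a} \STAR F_{b}{}^{c} + A_{b} \STAR F_{a}{}^{c}\bigr]u^{a}u^{b},
\end{equation*}
and observe that since $u^{a}u^{b}$ is symmetric under $a\leftrightarrow b$, the two terms inside the bracket contribute equally after relabeling dummy indices. Using the bilinearity of $\STAR$ (which acts on the suppressed set-index $I$ and therefore passes through the contractions with $u^{a}$ and $u^{b}$), this collapses to
\begin{equation*}
\tfrac{1}{2}\bigl[A_{a} \STAR F_{b}{}^{c} + A_{b} \STAR F_{a}{}^{c}\bigr]u^{a}u^{b}
= (A_{a}u^{a}) \STAR (F_{b}{}^{c}u^{b}).
\end{equation*}

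Next I would use the skew-symmetry of $\mathbf{F}_{I}$ in its two form-indices, i.e.\ $F_{b}{}^{c} = -F^{c}{}_{b}$, to flip the sign and move this term to the right-hand side of the geodesic equation. The identity then reads
\begin{equation*}
\frac{du^{c}}{ds} + A^{c} \STAR \bigl[\partial_{(a} A_{b)}\,u^{a}u^{b}\bigr]
= (A_{a}u^{a}) \STAR F^{c}{}_{b}\,u^{b},
\end{equation*}
which is precisely the claim.

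The only subtlety — and where I would be most careful — is the bookkeeping of indices: one must check that the $\STAR$ operator is being applied consistently on the enumeration index $I$ while the space indices $a,b,c$ and the velocity contractions behave as ordinary tensor operations, so that the factor of $1/2$ cancels correctly against the symmetrization by $u^{a}u^{b}$, and the sign flip from $F_{b}{}^{c}\mapsto F^{c}{}_{b}$ is handled only once. No deeper geometric input is needed beyond Proposition \ref{prop:christoffel}; everything else is algebraic manipulation inside the $\STAR$ product.
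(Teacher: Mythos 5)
Your proof is correct and follows essentially the same route as the paper: substitute the connection from Proposition \ref{prop:christoffel} into the geodesic equation $du^{c}/ds+\Gamma^{c}{}_{ab}u^{a}u^{b}=0$, use the symmetry of $u^{a}u^{b}$ to collapse the bracketed term, and flip the sign via the skew-symmetry of $\mathbf{F}_{I}$. In fact you spell out the two steps the paper leaves implicit (the cancellation of the $1/2$ against the symmetrization and the final $F_{b}{}^{c}=-F^{c}{}_{b}$ sign flip), so nothing is missing.
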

\begin{proof}
The geodesic line is defined as:
\begin{equation}\label{eq:geodesic}
\frac{Du^{c}}{ds}=\frac{du^{c}}{ds}+\Gamma^{c}_{ab}u^{a}u^{b}=0
\end{equation}

by substituting the expression of the connection and due to the symmetries:
\begin{equation}
\frac{du^{c}}{ds}+ A^{c} \STAR \partial_{(a} A_{b)}u^{a}u^{b} +
A_{a} \STAR F_b{}^c u^{a}u^{b}=0
\end{equation}
\end{proof}

According the result of this Proposition, even though the Lorentz-like right side, $F_{ab}u^{b}$,  be null the connection is non null and the geodesic are not straight lines.

\subsection{The Curvature Tensor}\label{subsec:curvature}

The curvature tensor can be obtained from two different procedures. The first is based on the successive ordinary derivatives of the metric tensor, while the second is based on the exterior derivative of the connection form $\boldsymbol\omega_{ab} = \Gamma_{abc} \mathbf{w}^{c}$. This second approach is more economic and elegant, but we will use the first approach based on the first and second derivative of the metric tensor. The Riemann curvature is defined from the connection as:\cite{Jost:2011,Misner:1973}:

\begin{equation}
R^{a}{}_{bcd} = \partial_{c}\Gamma^{a}{}_{bd} -
\partial_{d}\Gamma^{a}{}_{bc} +
\Gamma^{a}{}_{ec} \Gamma^{e}{}_{bd} - \Gamma^{a}{}_{ed} \Gamma^{e}{}_{bc}
\end{equation}

The expression of the curvature can be obtained in a local  reference system, that is by using the Equivalence Principle\cite{Misner:1973}, and next it can be generalized. In a local frame the connections become null and the curvature tensor is expressed as: $R_{abcd} =
\partial_{c}\Gamma_{abd} -
\partial_{d}\Gamma_{abc}$, or expressed from the second derivative of
the metric tensor as\cite{Landau:1973}:
\begin{equation}
R_{abcd} = \frac{1}{2}(\partial_{bc}g_{ad}+ \partial_{ad}g_{bc} -
\partial_{bd}g_{ac} - \partial_{ac}g_{bd})
\end{equation}

By grouping the terms involving the second and the first  derivatives as, $R_{abcd}= R^{(1)}_{abcd}+R^{(2)}_{abcd}$:

\begin{eqnarray}
2R^{(2)}_{abcd}  & = &
A_{a} \STAR (\partial_{bc}A_{d} - \partial_{bd}A_{c}) +
A_{b} \STAR (\partial_{ad}A_{c} - \partial_{ac}A_{d}) + \\ &&
A_{c} \STAR (\partial_{ad}A_{b} - \partial_{bd}A_{a}) +
A_{d} \STAR (\partial_{bc}A_{a} - \partial_{ac}A_{b}) \\
2R^{(1)}_{abcd} & = &
\partial_{b}A_{a} \STAR \partial_{c}A_{d} +
\partial_{b}A_{d} \STAR \partial_{c}A_{a} +
\partial_{a}A_{b} \STAR \partial_{d}A_{c} +
\partial_{a}A_{c} \STAR \partial_{d}A_{b}  \\ &&
-\partial_{b}A_{a} \STAR \partial_{d}A_{c}
-\partial_{b}A_{c} \STAR \partial_{d}A_{a}
-\partial_{a}A_{b} \STAR \partial_{c}A_{d}
-\partial_{a}A_{d} \STAR \partial_{c}A_{b}
\end{eqnarray}

\begin{definition}[Current and Pre-current]\label{def:current}
Let  $J_{Iabc}$  and $J_{Ia}$ be defined as:
\begin{equation}\label{eq:current}
J_{Iabc}= \nabla_{a}F_{Ibc} \qquad J_{Ib}= J^{a}{}_{Iab}=\nabla^{a}F_{Iab}
\end{equation}
named pre-current and current respectively. The pre-current is defined in a Riemannian frame as: $J_{Iabc}= \partial_{ab}A_{Ic}-\partial_{ac}A_{Ib}$.
\end{definition}

The current $\mathbf{J}_{I}$ is a 1-form that admit a more compact definition as: $\mathbf{J}_{I}=\delta \mathbf{F}_{I} =\Delta \mathbf{A}_{I} - d(\delta \mathbf{A}_{I})$. The normalization proposed in Proposition \ref{prop:gauge} is not mandatory, but if the 1-forms $\mathbf{A}_{I}$ are normalized, then:
\begin{equation}
\mathbf{J}_{I}=\Delta \mathbf{A}_{I}
\end{equation}

We have used the term pre-currents to name $\nabla_{a}F_{bc}$ because they seem be a primary magnitude from which can be obtained the currents by contraction. These tensors are very important in the curvature, Ricci and Einstein tensors as is shown afterward, therefore they must be relevantly considered in the study of the curved manifolds. They have two symmetries: $J_{Ia(bc)}=0$ and $J_{I[abc]}=0$ as illustrated in Figure \ref{fig:precurrents}.

\begin{figure}
  \centering
  \includegraphics[width=7cm]{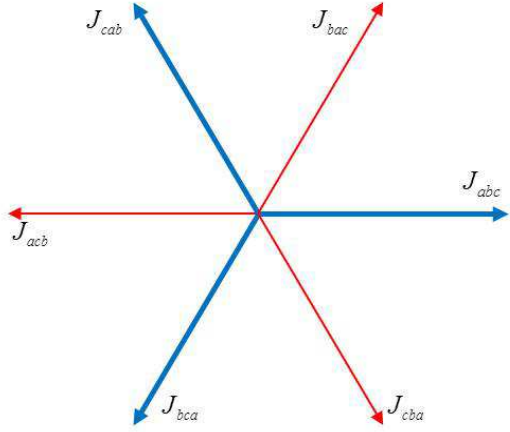}\\
  \caption{Map of the pre-current group with three different index into a 2-dimensional space. The two type of symmetries admit a graphic representation, the symmetries involving two pre-currents: $J_{abc}+J_{acb}=0$ and three pre-currents, the clockwise: $J_{abc}+J_{bca}+J_{cab}=0$ as well as the anti-clockwise: $J_{cba}+J_{bac}+J_{acb}=0$  }\label{fig:precurrents}
\end{figure}

The term in the curvature tensor involving the second derivatives can be  more compactly rewrite as:
\begin{equation}
2R^{(2)}_{abcd}  =
A_{a} \STAR  J_{bcd} - A_{b} \STAR  J_{acd} +
A_{c} \STAR  J_{dab} - A_{d} \STAR  J_{cab}
\end{equation}

The term involving the first derivative can be transformed by using the symmetric and skew-symmetric parts of the first derivative according the Equations (\ref{eq:aux1}) and (\ref{eq:aux2}) in the local Riemannian frame as:
\begin{eqnarray}
2R^{(1)}_{abcd} & = &
2S_{ac}\STAR S_{bd} - 2S_{ad}\STAR S_{bc}  \\ &&
-F_{ab}\STAR F_{cd}
- \frac{1}{2} F_{ac}\STAR F_{bd} + \frac{1}{2} F_{ad}\STAR F_{bc}
\end{eqnarray}

The expression in a general frame  can be carried out by the transformation: $\partial_{a} \rightarrow \nabla_{a}$. In this case, this transformation does not manifest explicitly.   The expression is:
\begin{eqnarray}\label{eq:curvature}
R_{abcd} &=& \frac{1}{2}(A_{a} \STAR  J_{bcd} - A_{b} \STAR  J_{acd}+
A_{c} \STAR  J_{dab} - A_{d} \STAR  J_{cab}) +\\
 && \frac{1}{4}(F_{ad} \STAR F_{bc} -
F_{ac} \STAR F_{bd} - 2 F_{ab}\STAR F_{cd}) \\
 && S_{ac}\STAR S_{bd} - S_{ad}\STAR S_{bc}
\end{eqnarray}

If the curvature tensor is expressed based on three sub-terms as: $R_{abcd}= R^{(c)}_{abcd} + R^{(f)}_{abcd} + R^{(s)}_{abcd}$ corresponding to every one of the previous lines:

\begin{eqnarray}\label{eq:curvature2}
R^{(c)}_{abcd} &=& \frac{1}{2}(A_{a} \STAR  J_{bcd} - A_{b} \STAR  J_{acd} +
A_{c} \STAR  J_{dab} - A_{d} \STAR  J_{cab}) \\
R^{(f)}_{abcd}  &=& \frac{1}{4}(F_{ad} \STAR F_{bc} -
F_{ac} \STAR F_{bd} - 2 F_{ab}\STAR F_{cd})\\
R^{(s)}_{abcd} &=& S_{ac}\STAR S_{bd} - S_{ad}\STAR S_{bc}
\end{eqnarray}

It is verified the first Bianchi identities for each one of the sub-terms; that is:

\begin{equation}
R_{a[bcd]}= R^{(c)}_{a[bcd]} = R^{(f)}_{a[bcd]} = R^{(s)}_{a[bcd]}=0
\end{equation}

\begin{theorem}
A sufficient condition for the manifold be non-curved, $R_{abcd}=0$, is that the set $\mathbf{A}_{I}$ be closed.
\end{theorem}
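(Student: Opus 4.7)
The plan is to invoke the three-part decomposition of the curvature tensor given in Equation (\ref{eq:curvature2}), namely $R_{abcd}= R^{(c)}_{abcd} + R^{(f)}_{abcd} + R^{(s)}_{abcd}$, and to show that under the hypothesis each of the three summands vanishes identically. The hypothesis that the set $\mathbf{A}_I$ is closed means $\mathbf{F}_I = d\mathbf{A}_I = 0$ for every $I$, so the task reduces to tracking how this single condition propagates through the three auxiliary tensors $F$, $S$ and $J$ that build the curvature.

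First I would handle $R^{(f)}_{abcd}$: its definition involves only $\STAR$-products of the 2-forms $F_{Iab}$, so it is immediate that $F_{Iab}=0$ for all $I$ forces $R^{(f)}_{abcd}=0$. Next I would dispose of $R^{(s)}_{abcd}$ by appealing to Proposition \ref{prop:7}, which exhibits the symmetric tensor $\mathbf{S}_I$ as a linear combination of $\STAR$-products involving $\mathbf{F}_I$; since $\mathbf{F}_I\equiv 0$, it yields $S_{Iab}=0$, and hence $R^{(s)}_{abcd}= S_{ac}\STAR S_{bd} - S_{ad}\STAR S_{bc}=0$. Finally for $R^{(c)}_{abcd}$, I would note that the pre-currents are $J_{Iabc}=\nabla_a F_{Ibc}$ by Definition \ref{def:current}; since $\mathbf{F}_I$ vanishes as a field (not merely at a point), every covariant derivative of it vanishes as well, so $J_{Iabc}=0$ and consequently $R^{(c)}_{abcd}=0$. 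Collecting the three vanishings gives $R_{abcd}=0$ throughout $M$.

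I expect no serious obstacle here, since the work has been done in the preceding propositions: the decomposition (\ref{eq:curvature2}) and Proposition \ref{prop:7} together make the argument essentially a verification. The only subtlety worth stating explicitly is that closedness is assumed as a global (field) condition, which legitimizes differentiating $\mathbf{F}_I$ to conclude $J_{Iabc}=0$; a pointwise-only assumption would not suffice for the $R^{(c)}$ term. As a remark, this result is consistent with Theorem \ref{th:plainspace}: on a simply connected domain the Poincar\'e lemma upgrades closedness of the $\mathbf{A}_I$ to exactness, and exactness was already shown to imply flatness, but the present argument avoids any topological hypothesis and works directly from the curvature formula.
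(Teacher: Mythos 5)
Your proposal is correct and follows essentially the same route as the paper: the paper's own (much terser) proof likewise argues $\mathbf{F}_I=d\mathbf{A}_I=0$ forces $\mathbf{S}_I=0$ (via Proposition \ref{prop:7}) and $J_{Iabc}=0$, and then reads off $R_{abcd}=0$ from the curvature expression, which is exactly your term-by-term verification on the decomposition of Equation (\ref{eq:curvature2}). Your explicit remark that closedness is a field condition (needed to differentiate $\mathbf{F}_I$ and kill the pre-currents) is a sound clarification the paper leaves implicit.
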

\begin{proof}
If $d\mathbf{A}_{I}=0$ implies that: $\mathbf{F}_{I}=d\mathbf{A}_{I}=0$, which implies: $\mathbf{S}_{I}=0$ and also $J_{Iabc}=0$, therefore $R_{abcd}=0$.
\end{proof}

The result of the previous Theorem generalizes the obtained in Theorem \ref{th:plainspace}, because be exact is a particular case of be closed. If $\mathbf{A}_{I}$ is closed it can be expressed according the Hodge decomposition as\cite{Morita:2001}:
\begin{equation}
\mathbf{A}_{I} = d\phi_{I} + \boldsymbol\gamma_{I}
\end{equation}

where $\boldsymbol\gamma_{I}$ is an harmonic $1$-form representative of the homology class to what  $\mathbf{A}_{I}$ belongs. However, it is an harmonic form, that is $\delta\boldsymbol\gamma_{I}=0$ and  $d\boldsymbol\gamma_{I}=0$ and it can not generate curvature. In simply connected manifold the condition of be closed is equivalent to be exact according the Poincar\'{e} Lemma, but in non-simply connected manifold, with homology classes, it is verified also: $\mathbf{F}_{I}=d\mathbf{A}_{I}=0$. The homology representative $1$-form $\boldsymbol\gamma_{I}$ is a closed but non-exact form because exists a cycle, a closed sub-manifold, $z$ such as\cite{Morita:2001}:
\begin{equation}
\int_{z}\boldsymbol\gamma_{I} \neq 0
\end{equation}

therefore there are flat manifolds that have not a globally Cartesian-like metric because the homology representative $1$-form can not be expressed as an exact form In this case, the line element is:
\begin{equation}
ds^{2} = \sum_{I=1}^{n} (d\phi_{I})^{2} + 2 d\phi\STAR \boldsymbol\gamma_{a} \mathbf{w}^{a} + \boldsymbol\gamma_{a}\STAR \boldsymbol\gamma_{b} \mathbf{w}^{a} \mathbf{w}^{b}
\end{equation}

the geodesic is according the Proposition \ref{prop:geodesic} and $\mathbf{F}_{I}=0$:
\begin{equation}
\frac{du^{c}}{ds}+ A^{c} \STAR \partial_{(a} A_{b)}u^{a}u^{b} =0
\end{equation}

that are not straight lines, except if $\partial_{(a} A_{b)}=0$ that implies that connections are null. 

\begin{theorem}
If the set $\mathbf{A}_{I}$ is closed but non-exact, that is, a non-simply connected and non-curved, $R_{abcd}=0$, manifold, then the metric is not reducible to a globally Cartesian-like, the connection is non null, and its geodesic are not straight lines.
\end{theorem}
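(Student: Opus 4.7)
My plan is to combine the Hodge decomposition of the closed $1$-forms $\mathbf{A}_I$ with the simplified expressions for the Christoffel symbols and the geodesic equation that arise when $F_{Iab}=0$, and to handle the globally-Cartesian claim by a topological obstruction coming from the non-trivial first de Rham class of $\mathbf{A}_I$.

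Since $\mathbf{A}_I$ is closed, $\mathbf{F}_I = d\mathbf{A}_I = 0$, and by Proposition~\ref{prop:2} we have the Hodge decomposition $\mathbf{A}_I = d\phi_I + \boldsymbol\gamma_I$ with $\boldsymbol\gamma_I$ harmonic. The non-exactness assumption forces at least one $\boldsymbol\gamma_I$ to represent a non-trivial first de Rham class, witnessed, as noted in the paragraph preceding the theorem, by a cycle $z$ with $\int_z \boldsymbol\gamma_I \neq 0$. For the first conclusion I would argue by contradiction: if the metric were globally reducible to Cartesian form in the sense of Proposition~\ref{prop:plainspace}, there would exist global functions $\psi_I$ on $M$ with $g = \sum_I d\psi_I \STAR d\psi_I$, and these would give a global chart making $M$ diffeomorphic to a contractible coordinate domain on which every closed $1$-form is exact by the global Poincar\'e lemma. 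This contradicts the non-vanishing period $\int_z \boldsymbol\gamma_I$.

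For the second conclusion I would specialize Proposition~\ref{prop:christoffel} with $F_{Iab}=0$, which collapses the connection to $\Gamma_{cab} = A_c \STAR \partial_{(a} A_{b)}$. If this were identically zero in the Takagi-adapted coordinates, then the Christoffel formula~(\ref{eq:christoffeldef}) together with the usual cyclic-sum manipulation would give $\partial_c g_{ab}=0$, so $g_{ab}$ would be constant and hence the metric would be globally Cartesian-like, contradicting the first conclusion. The geodesic equation from Proposition~\ref{prop:geodesic} with $F_{ab}=0$ reduces to $du^c/ds = -A^c \STAR \partial_{(a} A_{b)} u^a u^b = -\Gamma^c{}_{ab}u^a u^b$, which by the previous step is not identically zero, so for suitable initial velocity the coordinate acceleration is non-zero and the geodesic fails to be a straight line.

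The delicate step is the first one. Different Takagi factorizations of the same metric are related by pointwise orthogonal transformations, so the non-exactness of the given $\mathbf{A}_I$ does not by itself rule out the existence of some other globally exact factorization of $g$. The obstruction must therefore be phrased in terms of the de Rham class $[\boldsymbol\gamma_I]$ being a diffeomorphism invariant that survives any change of factorization and any change of coordinates, so that the existence of a globally exact factorization is incompatible with the fixed non-zero period over $z$.
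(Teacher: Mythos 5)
Your skeleton (Hodge decomposition with a harmonic part of non-zero period, then the $F_{Iab}=0$ specializations of Propositions~\ref{prop:christoffel} and~\ref{prop:geodesic}) is the same as the paper's, whose de facto proof is the informal discussion preceding the theorem. But two of your key inferences fail. First, ``a global chart makes $M$ diffeomorphic to a contractible coordinate domain'' is not valid: a global chart only identifies $M$ with an open subset of $\mathbb{R}^n$, which may have non-trivial first de Rham cohomology (an annulus, say), so the global Poincar\'e lemma is unavailable and no contradiction with $\int_z\boldsymbol\gamma_I\neq 0$ follows yet. The first conclusion can be rescued, but by a different mechanism than the invariance of $[\boldsymbol\gamma_I]$ you gesture at in your last paragraph (which is exactly what is not obvious under a change of factorization): closedness forces $S_{Iab}=0$ by Proposition~\ref{prop:7}, hence $\nabla_a A_{Ib}=0$ by Equation~(\ref{eq:aux2}), i.e.\ the $\mathbf{A}_I$ are parallel; in a hypothetical global chart with $g_{ab}=\eta_{ab}$ the Christoffel symbols vanish, so parallel $1$-forms have constant components $A_{Ib}=c_{Ib}$ and are exact, $\mathbf{A}_I=d(c_{Ib}y^b)$, giving zero periods and the desired contradiction. (Alternatively, the compactness assumed of $M$ in Section~\ref{sec:takagimetric} already rules out any global chart.)

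Second, and more seriously, your argument for ``the connection is non null'' breaks at ``$g_{ab}$ constant, hence globally Cartesian-like'': constancy of the components is a chart-by-chart statement and does not manufacture a single global chart, so it does not contradict your first conclusion. The flat torus with $\mathbf{A}_I=d\theta^I$ is the test case: these forms are closed, non-exact, $R_{abcd}=0$, the metric is not globally Cartesian-like (no global chart exists), yet in the angle coordinates $\Gamma^{c}{}_{ab}\equiv 0$ and the geodesics are straight. So the unconditional conclusions ``connection non null'' and ``geodesics not straight'' cannot be proved -- they fail as stated whenever $\partial_{(a}A_{b)}=0$. The paper's own preceding text asserts them only with the caveat ``except if $\partial_{(a}A_{b)}=0$, which implies the connections are null''; your proof silently needs the same additional hypothesis, and as written the contradiction you invoke to exclude that case is circular with the flawed globalization step above.
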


The previous results suggest that is possible to define a subdivision of the flat manifold class in two subclasses: flat (or strong-flat) and semi-flat. The following classification summarizes the manifold types and subtypes, where the condition of non-closed 1-forms is associated to curved manifold and the condition of closed is associated to the two subtypes: strong-flat and semi-flat:
\begin{enumerate}
  \item Strong-flat manifold: the set of $1$-form are exact, $\mathbf{A}_{I}=d\phi_{I}$, its metric is globally reducible to  a Cartesian-like, both the connection and the curvature tensor are null, $\mathbf{A}_{I}$ are Killing vectors, and its geodesic are straight lines.
  \item Semi-flat manifold: the set of $1$-form are closed, $d\mathbf{A}_{I}=0$, but non exact, its metric is not globally reducible to  a Cartesian-like, the connection is non null, its geodesic are not straight lines, $\mathbf{A}_{I}$ are Killing vectors, and  the curvature tensor is null, $R_{abcd}=0$. 
  \item Curved manifold: the set of $1$-form are not closed, $d\mathbf{A}_{I}\neq 0$,  its metric is not globally reducible to  a Cartesian-like, neither the connection nor the curvature tensor are null, $R_{abcd}\neq 0$ and its geodesic are not straight lines.
\end{enumerate}

The criticism to this classification is that the strong-flat manifold is too much evidently flat. 
The introduction of refinements and nuances in the definition of flatness must reduce the number of manifold types included in this class\cite{Kobayashi:1996}[pp. 222], but always remains the extreme case, the Euclidean.

Figure \ref{fig:diagram} shows a diagram of the two analytical paths starting in the metric $g_{ab}$ and ending in the curvature $R_{abcd}$. The upper diagram is the usually used in the literature of Riemannian geometry\cite{doCarmo:1992}\cite{Jost:2011} where the metric, which is presented as the primary concept, the connection and curvature are the elements in a conceptual chain.
The lower diagram shows the approach proposed in this paper; the path from $g_{ab}$ to $R_{abcd}$ is achieved by using several intermediate geometric objects allowing an alternative viewpoint for the Riemannian geometry. The symmetrical tensor $\mathbf{S}_{I}$ is defined from $\mathbf{A}_{I}$, but in the practice it depends on the skew-symmetric $\mathbf{F}_{I}$, which is the cornerstone of this presentation of the Riemannian geometry obtained from the Takagi's factorization of the metric tensor. This approach is less economical because uses much more intermediate objects. However, allows a different viewpoint for the curvature as well as a new way for the decomposition of the curvature tensor.

\begin{figure}
  \centering
  \includegraphics[width=7cm]{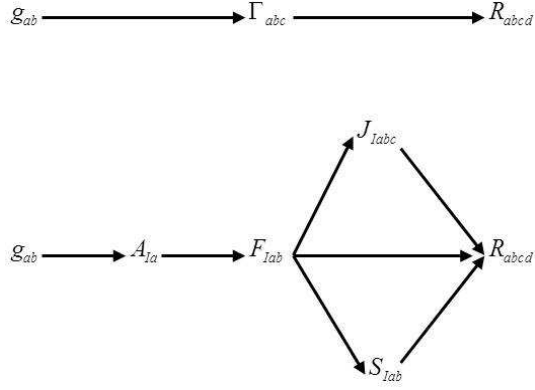}
  \caption{Diagrams of how the Riemannian curvature tensor can be obtained. The upper diagram is the the usual path contained in the Geometry literature. The lower diagram is the proposed in this paper based on a set of intermediate geometric objects.}\label{fig:diagram}
\end{figure}

\subsection{Ricci and Einstein Tensors}\label{subsec:Ricci}

The Ricci tensor, $R_{ab}=R^{c}{}_{acb}$, is expressed as:

\begin{eqnarray}\label{eq:ricci}
R_{ab} &=& \frac{1}{2}(-A_{a} \STAR  J_{b} + A^{c} \STAR  J_{acb} -
A_{b} \STAR  J_{a} + A^{c} \STAR  J_{bca}) +\\
 && -\frac{3}{4}F_{ac} \STAR F_{b}{}^{c} + S\STAR S_{ab}
  - S_{ac}\STAR S_{b}{}^{c}
\end{eqnarray}

where: $S_{I}= S^{a}_{Ia}=\nabla^{a} A_{Ia}$, is a scalar whose value is depending on the optional normalization of the $\mathbf{A}_{I}$ forms. The scalar curvature $R$ is:
\begin{equation}
R = -2 A_{a} \STAR J^{a} - \frac{3}{4}F_{ab} \STAR F^{ab} +S\STAR S - S_{ab}\STAR S^{ab}
\end{equation}

The Einstein Tensor $G_{ab}=R_{ab}-\frac{1}{2}g_{ab}R$ can be expressed as follows:
\begin{equation}\label{eq:newEinstein}
G_{ab} =  T^{(f)}_{ab} +  T^{(c)}_{ab} + T^{(s)}_{ab}
\end{equation}

where the three $\mathbf{T}$ symmetrical tensors are:
\begin{eqnarray}
T^{(f)}_{ab} &=& -\frac{3}{4} \left (F_{ac} \STAR F_{b}{}^{c} -\frac{1}{2} g_{ab} F_{cd} \STAR F^{cd}\right )\\
T^{(c)}_{ab} &=&  \frac{1}{2}\left(-A_{a} \STAR  J_{b} + A^{c} \STAR  J_{acb} -
A_{b} \STAR  J_{a} + A^{c} \STAR  J_{bca} + 2g_{ab}A_{c}\STAR J^{c}\right)\\
T^{(s)}_{ab} &=&   S\STAR S_{ab} -\frac{1}{2} g_{ab}S\STAR S - S_{ac}\STAR S_{b}{}^{c}  + \frac{1}{2}g_{ab} S_{cd}\STAR S^{cd}
\end{eqnarray}

The contracted second Bianchi identities implies that: $\nabla^{c}G_{ac}=0$, therefore it must be verified that:
\begin{equation}
\nabla^{b}\left(T^{(f)}_{ab} +  T^{(c)}_{ab} + T^{(s)}_{ab}\right) =0
\end{equation}

In a physic interpretation of the three $\mathbf{T}$  as the stress-energy tensor in the Einstein equation, the condition: $\nabla^{b} (\sum T_{ab})=0$ means that the $\sum T_{ab}$ is a compleat description of the matter fields. The Bianchi identity is verified due to the definition of the tensors, but its is interesting to illustrate how it is applied to some sub-terms. By resembling  a very similar term in Classical Field Theory\cite{Landau:1973,Misner:1973}, it is verified that:
\begin{equation}
\nabla^{b}\left ( F_{ac} \STAR F_{b}{}^{c} -\frac{1}{4} g_{ab} F_{cd} \STAR F^{cd}\right ) = J^{c^{}}\STAR F_{ac}
\end{equation}

but the most similar term in the Einstein tensor is the following, with some differences:
\begin{equation}
\nabla^{b} \left ( F_{ac} \STAR F_{b}{}^{c} -\frac{1}{2} g_{ab} F_{cd} \STAR F^{cd}\right )  =  J^{c}\STAR F_{ac}  -  J_{dca}\STAR F^{cd}
\end{equation}

\section{Conclusion}\label{sec:conclusion}

We have presented an analytic path to study the main objects of the Riemann geometry. As result of this study, these objects can be expressed based on a set $1$-forms as $\mathbf{A}_{I}$ and $\mathbf{J}_{I}$ ,$2$-form as $\mathbf{F}_{I}$, a symmetric rank 2 tensor $\mathbf{S}_{I}$ and a rank 3 tensor $J_{Iabc}$. This new analytical path is less economical, but provides some valuable results.

The curvedness or flatness property of a manifold depends on the curvature tensor, $R_{abcd}$, but this property can be alternatively  defined from the closed, or non-closed, property of the differential 1-form $\mathbf{A}_{I}$. This new condition may be more simple and also allows some refinements in the case of non-simply connected manifolds.

The aim of this paper is indirectly to provide solutions for the Einstein equation, but it involves two heterogeneous sides. The left side that concerns with the geometric $\mathbf{G}$, have a formal structure highly different to the physic right side, that concerns the physic $\mathbf{T}$, of mass and energy distributions. The problems in the solution for general cases may be highly dependent of the heterogeneous properties of both sides. Perhaps the lack of solutions that had involved many research from a century is due to this reason.

The proposed analysis provides a new factorization of the Einstein tensor in three sub-terms that resemble physic theories. The main conclusion of this paper is that if we can define a complete model of matter fields by means of stress-energy tensors fitting in these sub-terms, then the solution of the Einstein equation is immediate. Although the structures are similar to the Electromagnetic Field, they are not Maxwellian due to the importance of concepts as the pre-currents, the symmetric tensor and mainly the lack of phenomenological meaning because they are geometric objects. For physic applications, the four dimensional space-time that is modeled as a pseudo-Riemann manifold can be described by means of four vector fields that are very similar to the mathematic structure of Electromagnetic Fields. 


\bibliographystyle{plain}
\bibliography{bibliografiabasica}

\begin{thebibliography}{1}

\bibitem{doCarmo:1992}
M.P. do~Carmo.
\newblock {\em Riemannian Geometry}.
\newblock Birkh{\"a}user Boston, 1992.

\bibitem{Gockeler:1989}
M.~G\"{o}ckeler and T.~Sch{\"u}cker.
\newblock {\em Differential Geometry, Gauge Theories, and Gravity}.
\newblock Cambridge University Press, 1989.

\bibitem{Horn:1985}
R.A. Horn and C.R. Johnson.
\newblock {\em Matrix Analysis}.
\newblock Cambridge University Press, 1985.

\bibitem{Jost:2011}
J.~Jost.
\newblock {\em Riemannian Geometry and Geometric Analysis}.
\newblock Springer, 2011.

\bibitem{Kobayashi:1996}
S.~Kobayashi and K.~Nomizu.
\newblock {\em Foundations of Differential Geometry, Volume I}.
\newblock Wiley, 1996.

\bibitem{Landau:1973}
L.~D. Landau and E.M. Lifshitz.
\newblock {\em The Classical Theory of Fields}.
\newblock Butterworth-Heinemann, fourth ed. edition, 1973.

\bibitem{Misner:1973}
C.W. Misner, K.S. Thorne, and J.A. Wheeler.
\newblock {\em Gravitation}.
\newblock W. H. Freeman, 1973.

\bibitem{Morita:2001}
S.~Morita.
\newblock {\em Geometry of Differential Forms}.
\newblock American Mathematical Society, 2001.

\bibitem{Stephani:2009}
H.~Stephani, D.~Kramer, M.~MacCallum, C.~Hoenselaers, and E.~Herlt.
\newblock {\em Exact Solutions of Einstein's Field Equations}.
\newblock Cambdridge University Press, 2009.

\end{thebibliography}

\end{document}